\documentclass[a4paper,11pt,reqno]{amsart}
\usepackage{mathrsfs}
 \usepackage[all]{xy}
\usepackage{amsmath,amssymb,amscd,bbm,amsthm,mathrsfs}
\usepackage{color,xcolor}
\usepackage{graphicx}
\usepackage{manfnt}
\newtheorem{thm}{Theorem}[section]

\newtheorem{cor}{Corollary}[section]
\newtheorem{con}{Conjecture}[section]

\newtheorem{rem}{Remark}[section]
\newtheorem{exam}{Example}[section]

\begin{document}
\numberwithin{equation}{section}

 \title[Hermitian manifolds with nonpositive holomorphic sectional curvature]{Hermitian manifolds with nonpositive holomorphic sectional curvature}
\author{Kai Tang}
\address{Kai Tang. School of Mathematical Sciences, Zhejiang Normal University, Jinhua, Zhejiang, 321004, China} \email{{kaitang001@zjnu.edu.cn}}
\keywords{Hermitian manifold; Holomorphic sectional curvature; Canonical bundle}
\thanks{\text{Foundation item:} Supported by Natural Science Foundation of Zhejiang Province (No. LMS26A010006).}
\begin{abstract}
We study compact K\"ahler manifolds admitting Hermitian metrics with nonpositive holomorphic sectional
curvature. We prove that the canonical bundle of such a manifold is
nef, removing the pluriclosed assumption from the corresponding
nefness result of Broder-Stanfield \cite{BroderStanfield}. In complex dimension two, we
further show that negative  holomorphic sectional curvature
implies the ampleness of the canonical bundle. In complex dimension three, the same conclusion holds if the Hermitian metric is additionally assumed to be balanced. We also prove that vanishing  holomorphic sectional
curvature forces the first Chern class to vanish.
\end{abstract}

 \maketitle

%%\tableofcontents
\section{Introduction}
%%\subsection{Background}
The relationship between curvature and the positivity of canonical
bundles is a central topic in complex differential geometry. In the
K\"ahler setting, Wu-Yau \cite{WuYau} proved that a projective manifold admitting
a K\"ahler metric with negative holomorphic sectional curvature has
ample canonical bundle. Tosatti-Yang \cite{TosattiYang} subsequently
removed the projectivity assumption and proved that a compact K\"ahler
manifold with nonpositive holomorphic sectional curvature has nef
canonical bundle. In the strictly negative case, the canonical bundle is ample. The quasi-negative case was further studied by Diverio-Trapani \cite{DiverioTrapani} and Wu-Yau
\cite{WuYauRemark}. Nomura later gave an alternative
proof of these results using the K\"ahler-Ricci flow \cite{Nomura}. For more related works, we refer readers to
\cite{CLT,LNZ,Tang1,Tang2,ZhangSY,ZhangYS}.

The corresponding questions for arbitrary Hermitian metrics remain
largely open. Yang-Zheng \cite{YangZheng} proposed the following conjecture:
\begin{con}
\begin{itemize}
\item[(1)] if $X$ is Kobayashi hyperbolic, then $K_X$ is ample;
\item[(2)] if $X$ admits a Hermitian metric with quasi-negative holomorphic sectional curvature, then $K_X$ is ample;
\item[(3)] if $X$ admits a Hermitian metric with negative holomorphic sectional curvature, then $K_X$ is ample.
\end{itemize}
\end{con}
Here quasi-negative means nonpositive everywhere and strictly negative
at some point. By Yau's Schwarz lemma \cite{YauSchwarz}, the first conjecture implies the
third one, while the second also contains the third as a special case.
Thus, extending the Wu-Yau theory from K\"ahler metrics to general
Hermitian metrics is part of a natural program relating
curvature, hyperbolicity, and positivity of the canonical bundle.

To study these questions, Yang-Zheng \cite{YangZheng} introduced the real
bisectional curvature of a Hermitian metric. Its sign
is equivalent to that of the holomorphic sectional curvature in the
K\"ahler case, but it is a stronger condition for a general Hermitian
metric. This stronger curvature notion is adapted to the Hermitian
Schwarz lemma. They proved that if a compact K\"ahler manifold
admits a Hermitian metric with nonpositive real bisectional curvature,
then its canonical bundle is nef; if the real bisectional curvature is
quasi-negative, then the canonical bundle is ample. Here the given
curvature metric need not itself be K\"ahler.

Further progress has been obtained under additional geometric
assumptions. Broder-Stanfield \cite{BroderStanfield} proved a general Schwarz lemma for
Hermitian manifolds and showed that if a compact K\"ahler manifold
admits a pluriclosed Hermitian metric with nonpositive holomorphic sectional curvature, then $K_X$ is nef; strict negativity
implies that $K_X$ is ample. In complex
dimension two, Lee-Streets \cite{LeeStreets} proved, by means of the pluriclosed
flow, that negative real bisectional curvature implies the ampleness
of the canonical bundle without assuming that the underlying complex
surface is K\"ahler.

For an arbitrary Hermitian metric, however, holomorphic sectional
curvature alone does not provide the curvature estimate required by
the known Schwarz lemma arguments. Consequently, the methods based on
complex Monge-Amp\`ere equations or geometric flows do not directly
apply. It is natural to ask whether a compact K\"ahler
manifold admitting a Hermitian metric with nonpositive holomorphic sectional curvature must have nef canonical bundle. In this paper, we give an affirmative answer to this question. Our
result replaces the nonpositive real bisectional curvature assumption
in the theorem of Yang-Zheng \cite{YangZheng} by the weaker condition of nonpositive holomorphic sectional curvature. It also removes the
pluriclosed assumption from the corresponding nefness result of
Broder-Stanfield \cite{BroderStanfield}. Our main result is the following.

\begin{thm}\label{thm:main}
Let $X$ be a compact K\"ahler manifold and let $h$ be a Hermitian
metric on $X$.
\begin{itemize}
    \item[(1)] If the holomorphic sectional curvature of $h$ is
    nonpositive, then the canonical bundle $K_X$ is nef.

    \item[(2)] If the holomorphic sectional curvature of $h$ vanishes, then $c_1(X)=0$. Consequently, $X$ admits a Ricci-flat K\"ahler metric.
\end{itemize}
\end{thm}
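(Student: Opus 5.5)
The plan for part (1) is a Schwarz-lemma argument modelled on Tosatti--Yang \cite{TosattiYang}, arranged so that only the holomorphic sectional curvature of $h$ enters. Fix a K\"ahler metric $\omega_0$ on $X$ and suppose, for contradiction, that $K_X$ is not nef. Then there is a smallest $\varepsilon_0>0$ such that $-c_1(X)+\varepsilon[\omega_0]$ is K\"ahler for all $\varepsilon>\varepsilon_0$. For such $\varepsilon$, I will solve the Aubin--Yau Monge--Amp\`ere equation to obtain K\"ahler metrics $\omega_\varepsilon\in -c_1(X)+\varepsilon[\omega_0]$ with
\[
\mathrm{Ric}(\omega_\varepsilon)=-\omega_\varepsilon+\varepsilon\omega_0 \ge -\omega_\varepsilon .
\]
The goal is a uniform bound $\omega_\varepsilon\ge c\,h$ with $c>0$ independent of $\varepsilon$. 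Given such a bound, the volumes $\int\omega_\varepsilon^n$ stay bounded below, while the cohomology classes converge to a class that is nef but not big, since it lies on the boundary of the K\"ahler cone at $\varepsilon_0$. Standard arguments (Demailly--P\u{a}un and the Tosatti--Yang limiting argument) then give a contradiction, and even show that $\varepsilon_0=0$.

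To get the lower bound without Royden's trace lemma, I will not estimate $\mathrm{tr}_{\omega_\varepsilon}h$. Instead I will work with the largest eigenvalue
\[
u=\max_{v\neq0}\frac{|v|_h^2}{|v|_{\omega_\varepsilon}^2},
\]
in the spirit of the "extremal direction'' maximum principle. At a maximum point $p$, choose the unit vector $v$ realising the maximum. Extend $v$ to a local holomorphic vector field $V$ whose covariant derivatives with respect to $\omega_\varepsilon$ vanish at $p$. Then $\log\bigl(|V|_h^2/|V|^2_{\omega_\varepsilon}\bigr)$ is a smooth lower support function for $\log u$. Computing $\Delta_{\omega_\varepsilon}\log|V|^2_h$ via the Chern connection of $h$ produces the curvature term $-R^h(V,\bar V,e_i,\bar e_i)/|V|_h^2$, together with a nonnegative gradient term from the second fundamental form of the line spanned by $V$. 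The $\omega_\varepsilon$-side contributes $\mathrm{Ric}(\omega_\varepsilon)(V,\bar V)\ge -|V|^2$.

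The main obstacle is the $h$-curvature term: it is a mixed Chern bisectional quantity, not $H^h(V)$. My first attempt to overcome this will use two ingredients. First, replace $R^h$ by its symmetrisation $\widehat R^h$, which has K\"ahler symmetries and the same holomorphic sectional curvature, so that $\widehat R^h$ has nonpositive holomorphic sectional curvature in the Royden sense. Second, absorb the difference $R^h-\widehat R^h$, which is built from $\nabla T$ and $T*\bar T$ for the Chern torsion $T$ of $h$, into the gradient term produced at the extremal point. Concretely, I expect the Hermitian terms to appear as $|\partial_V(\cdot)+T(\cdot)|^2$-type squares after completing the square, which is exactly where the eigenvalue choice helps. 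If the absorption works, I get $\Delta\log u\ge -C$ in the viscosity sense with $C$ depending only on $h$ and $\omega_0$. To convert this into $u\le C'$, I plan to apply the maximum principle to $\log u-A\varphi_\varepsilon$, using the Monge--Amp\`ere potential and the $C^0$ bound on $\varphi_\varepsilon$, as in Wu--Yau \cite{WuYau}.

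For part (2), assume $H^h\equiv0$. Part (1) applies to $h$, so $K_X$ is nef. It then suffices to show $c_1(X)\cdot[\omega]^{n-1}=0$ for a K\"ahler form $\omega$: a nef class $\alpha$ with $\alpha\cdot[\omega]^{n-1}=0$ on a compact K\"ahler manifold vanishes, by the Hodge index inequality applied to a limit of K\"ahler classes. To compute $c_1(X)\cdot[\omega]^{n-1}=\frac1{2\pi}\int\mathrm{Ric}^{(1)}(h)\wedge\omega^{n-1}$, I will average $H^h\equiv0$ over the $\omega$-unit sphere. This gives the pointwise identity that the full symmetrisation of $R^h$ vanishes, hence that a combination of the four Chern--Ricci traces of $R^h$, taken with respect to $\omega$, is zero. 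The Bianchi-type identities for the Chern connection express the differences between these traces as $\partial$- or $\bar\partial$-exact terms plus torsion terms. Because $d\omega=0$, these should integrate to zero against $\omega^{n-1}$, yielding $\int\mathrm{Ric}^{(1)}(h)\wedge\omega^{n-1}=0$. Finally, Yau's theorem gives a Ricci-flat K\"ahler metric.
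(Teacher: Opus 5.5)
Both parts of your proposal have genuine gaps.

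\textbf{Part (1).} The gap sits exactly at the step you call the main obstacle, and the proposed remedy does not work.

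First, consider a tensor with K\"ahler symmetries and $H\le0$. Even then, the term $\sum_i\widehat R^h(e_i,\bar e_i,V,\bar V)$ at the top eigendirection $V$ is not controlled. For $e_i\perp V$ these are bisectional curvatures, which $H$ does not bound. Already for algebraic K\"ahler curvature tensors with $H<0$ in dimension $n\ge6$, this sum can be positive and of size comparable to $u^2$ when the other eigenvalues are close to $u$. That gives a bad term $-c\,u$ in $\Delta\log u$. Royden's lemma controls only the symmetric double trace, which is why Wu--Yau and Tosatti--Yang work with $\operatorname{tr}_{\omega_\varepsilon}h$.

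Second, $R^h-\widehat R^h$ contains terms linear in $\nabla T$. In the paper's conventions, $R^h_{i\bar jk\bar l}-R^h_{k\bar ji\bar l}=-\nabla_{\bar j}T_{ik\bar l}$ with $T_{ik\bar l}=\partial_ih_{k\bar l}-\partial_kh_{i\bar l}$. These are fixed, sign-indefinite tensors of $h$ contributing terms of order $u$. Your only nonnegative term is quadratic in $(\nabla^h-\nabla^{\omega_\varepsilon})V$, and a term not involving this difference cannot be absorbed by completing a square. Nothing prevents the square from vanishing at a point where $T=0$ but $\nabla T\neq0$. With merely $H_h\le0$ there is also no favourable multiple of $u$ left to absorb anything. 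This is precisely why Yang--Zheng assume real bisectional curvature and Broder--Stanfield assume pluriclosedness.

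The surrounding argument has further problems:
\begin{itemize}
\item You discard $\varepsilon\,\omega_0(V,\bar V)$, which is the only coercive term in the nonpositive case.
\item The $\log u-A\varphi_\varepsilon$ device needs a reference form in $-c_1(X)+\varepsilon[\omega_0]$ bounded below by $c\,h$ uniformly as $\varepsilon\downarrow\varepsilon_0$. That is circular.
\item Classes on the boundary of the K\"ahler cone can be big, so ``nef but not big'' is not the contradiction. The correct endgame is that $\omega_\varepsilon\ge c\,\omega_0$ makes the limit class K\"ahler, contradicting the minimality of $\varepsilon_0$.
\end{itemize}

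The paper avoids all of this. $H_h$ enters only through the rank-one Chern--Lu computation on $\mathbb P^1$, where the curvature term is exactly $H_h(\xi)u^2$; this excludes rational curves. Ou's theorem then gives that $K_X$ is pseudoeffective, and the Cao--H\"oring theorem would produce a $K_X$-negative rational curve if $K_X$ were not nef.

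\textbf{Part (2).} Your reduction is fine: a nef class $\alpha$ with $\alpha\cdot[\omega]^{n-1}=0$ vanishes, and the paper uses the same positivity argument. What fails is the computation of $\int_X\mathrm{Ric}^{(1)}(h)\wedge\omega^{n-1}$ from $\widehat R^h=0$.
\begin{itemize}
\item Traces of $R^h$ taken with respect to $\omega$ never produce $\mathrm{Ric}^{(1)}(h)=h^{k\bar l}R^h_{i\bar jk\bar l}$.
\item If you contract with $h$ instead, every contraction of $\widehat R^h=0$ gives $R^{(1)}+R^{(2)}+R^{(3)}+R^{(4)}=0$.
\item $R^{(1)}-R^{(3)}$ and $R^{(1)}-R^{(4)}$ are indeed $\partial$- resp.\ $\bar\partial$-exact (derivatives of the torsion $1$-form).
\item But modulo exact terms, $R^{(2)}-R^{(1)}$ involves $\Lambda_h(\sqrt{-1}\partial\bar\partial\omega_h)$ and a term quadratic in $T$, which are not exact.
\end{itemize}
So $\int_XR^{(2)}\wedge\omega^{n-1}$ against an unrelated K\"ahler form is not topological, and nothing forces the total to reduce to $c_1(X)\cdot[\omega]^{n-1}$.

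The missing idea is to pair with $\omega_G^{n-1}$, where $\omega_G$ is the Gauduchon metric conformal to $h$. Then the $h$-traces become scalar curvatures ($\operatorname{tr}_hR^{(1)}=\operatorname{tr}_hR^{(2)}=s_h$), and $s_h+\widehat s_h=0$ leaves only $\int_Xs_G\,\omega_G^n=\tfrac12\int_X|\partial_G^*\omega_G|^2\omega_G^n\ge0$. This gives merely the inequality $\int_X\mathrm{Ric}^{(1)}(\omega_G)\wedge\omega_G^{n-1}\ge0$. The nefness of $K_X$ from part (1) supplies the reverse inequality, and equality then yields $c_1(X)=0$ by the positivity argument you had in mind.
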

The proof of Theorem~\ref{thm:main}(1) is based on recent results of Ou \cite{Ou} and Cao-H\"oring \cite{CaoHoring} in
the birational geometry of compact K\"ahler manifolds. The curvature
assumption excludes rational curves on $X$. Ou's characterization of
uniruled compact K\"ahler manifolds then implies that $K_X$ is
pseudoeffective, while the results of Cao-H\"oring, together with
Ou's theorem, show that the non-nefness of $K_X$ would produce a
rational curve. This gives a contradiction.

The vanishing case has also been studied by Broder-Tang
\cite{BroderTang}. They proved that if a compact K\"ahler manifold
admits a pluriclosed Hermitian metric with  vanishing holomorphic sectional curvature, then the metric itself is
K\"ahler and hence flat. Theorem~\ref{thm:main}(2) removes the
pluriclosed assumption, but yields the weaker conclusion that
$c_1(X)=0$. The Calabi-Yau theorem \cite{YauCalabi} then provides a possibly
different Ricci-flat K\"ahler metric.

As an application of Theorem~\ref{thm:main}(1), we obtain the following
surface consequence.

\begin{cor}\label{cor1.1}
Let $X$ be a compact K\"ahler surface admitting a Hermitian metric
with negative holomorphic sectional curvature. Then $K_X$ is
ample.
\end{cor}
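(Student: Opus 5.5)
By Theorem~\ref{thm:main}(1), $K_X$ is nef. On a surface we then want to show $K_X^2>0$ and $K_X\cdot C>0$ for every irreducible curve $C\subset X$. Nakai--Moishezon, which holds for compact Kähler surfaces once $K_X^2>0$ forces projectivity, then gives ampleness. The curvature input is used in two ways: to exclude rational and elliptic curves, and to rule out the intermediate Kodaira dimensions.

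\textbf{Step 1: no rational or elliptic curves, and no nonconstant maps from tori.} Let $f\colon C\to X$ be a nonconstant holomorphic map from $\mathbb P^1$ or from a complex torus. The metric $f^*h$ may be degenerate at finitely many points. Away from those points, the Gauss curvature of $f^*h$ is at most the holomorphic sectional curvature of $h$ in the direction $f'$, by the decreasing property of curvature of holomorphic curves. That is the Griffiths-type formula, valid for any Hermitian (Chern) metric. Hence the curvature is $\le -\kappa<0$ on $C$. Gauss--Bonnet, with the branch points contributing with a favorable sign, then gives $2-2g(C)<0$, a contradiction for $g\le 1$. Equivalently, Yau's Schwarz lemma \cite{YauSchwarz} shows that any holomorphic map from $\mathbb C$ into $X$ is constant, so $X$ is Brody hyperbolic.

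\textbf{Step 2: classification.} Brody hyperbolicity rules out all surfaces of Kodaira dimension $-\infty$, since they are ruled or rational, as well as tori, K3, Enriques, and bielliptic surfaces. The K3 and Enriques surfaces are excluded because Kähler K3 surfaces contain nonconstant entire curves, for instance via elliptic fibrations on deformations. More cleanly, Theorem~\ref{thm:main}(1) together with the abundance/classification for Kähler surfaces gives $\kappa(X)\ge 0$. If $\kappa=0$, then $c_1(X)_{\mathbb R}=0$, and $X$ is covered by a torus or K3. One wants an entire curve here, and the worry is that K3s without elliptic curves might escape Step 1. I would instead argue via Ricci/holomorphic sectional curvature integrals: $K_X\equiv 0$ means some finite cover has trivial canonical bundle, and I expect one can compare with the Ricci-flat metric. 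The safest route is the known fact that every K3 surface contains rational curves or is non-hyperbolic; this is a standard consequence of deformation density of elliptic K3s and the closedness of the non-hyperbolic locus under Brody's lemma. If $\kappa=1$, then $X$ is properly elliptic, with an elliptic fibration whose smooth fibers are elliptic curves, contradicting Step 1. Therefore $\kappa(X)=2$, so $X$ is of general type, projective, and $K_X^2>0$ since $K_X$ is nef and big.

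\textbf{Step 3: strict positivity on curves.} Since $K_X$ is nef and big, $K_X\cdot C=0$ happens only for curves contracted to the canonical model. Such curves are $(-2)$-curves, i.e.\ smooth rational curves. They are excluded by Step 1, so $K_X\cdot C>0$ for all $C$. Nakai--Moishezon then shows $K_X$ is ample.

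The main obstacle is the $\kappa=0$ case, specifically excluding K3 surfaces and their quotients. The intended route is to prove $X$ is Brody hyperbolic and invoke the non-hyperbolicity of all K3, Enriques, and abelian surfaces.
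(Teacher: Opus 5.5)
Your proposal is correct and follows essentially the paper's route: nefness from Theorem~\ref{thm:main}(1), then hyperbolicity from the Schwarz lemma, where your curvature-decreasing plus Gauss--Bonnet argument is a fine self-contained version. You then use the Enriques--Kodaira classification to force $\kappa(X)=2$ via the fact that no K3 surface is Kobayashi hyperbolic (the paper cites this, whereas you sketch the elliptic-K3 density plus Brody argument), and finish by noting that a $K_X$-trivial curve would be a smooth rational $(-2)$-curve, followed by Nakai--Moishezon. Drop the curvature-integral alternative you float for $\kappa=0$: with $c_1^{BC}(X)=0$ the Gauduchon identity only yields $0=\frac12\int_X f(s_h+\widehat{s}_h)\omega^2+\frac12\int_X|\partial_G^*\omega_G|^2_{\omega_G}\omega_G^2$, whose two terms have opposite signs, so it gives no contradiction. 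That is exactly why the paper needs a balanced hypothesis in dimension three, and why relying on the non-hyperbolicity of K3 surfaces is the right call.
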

The analogous statement for K\"ahler metrics with negative
holomorphic sectional curvature is classical in complex dimension
two (see \cite{Wong1981,HeierLuWong}). The distinction here is that the
Hermitian metric carrying the negative  holomorphic sectional
curvature is not assumed to be K\"ahler. Our proof combines the
nefness obtained in Theorem~\ref{thm:main}(1) with standard results
from the theory of compact K\"ahler surfaces.

Under the balanced assumption, the surface result also extends
to complex dimension three.

\begin{cor}\label{cor:threefold}
Let $X$ be a compact K\"ahler threefold admitting a balanced Hermitian
metric with negative holomorphic sectional curvature. Then $K_X$ is ample.
\end{cor}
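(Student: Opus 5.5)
By Theorem~\ref{thm:main}(1), $K_X$ is nef. The plan is to show that $K_X$ is also big and that $X$ contains no curve $C$ with $K_X\cdot C=0$. Ampleness then follows from the basepoint-free theorem for K\"ahler threefolds, or equivalently from Nakai--Moishezon once we know $X$ is projective.

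\textbf{Step 1: no rational curves and no positive-dimensional tori.} Negative holomorphic sectional curvature of any Hermitian metric forces every holomorphic map $f\colon\mathbb{C}\to X$ to be constant. This uses the standard Schwarz lemma for maps from one-dimensional domains, which needs only holomorphic sectional curvature: pull back $h$ to the curve, where its Gaussian curvature is bounded above by the holomorphic sectional curvature along $f'$. Hence $X$ is Brody hyperbolic. In particular $X$ contains no rational curves, no elliptic curves and no subtori. The same argument applies to every submanifold.

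\textbf{Step 2: bigness, i.e.\ $K_X^3>0$.} Here the balanced hypothesis enters. The abundance conjecture is known for K\"ahler threefolds (Campana--H\"oring--Peternell), so $K_X$ is semiample and $\kappa(X)=\nu(X)\in\{0,1,2,3\}$. If $\kappa\le 2$, the Iitaka fibration is a morphism $\phi\colon X\to Y$ with $\dim Y<3$ whose general fibre $F$ satisfies $K_F\equiv 0$.
\begin{itemize}
\item If $\dim F=1$, then $F$ is an elliptic curve, which contradicts Step 1.
\item If $\dim F=2$, then $F$ is a surface with $c_1=0$. After a finite \'etale cover, $F$ is a torus (excluded by Step 1) or a K3 surface.
\item If $\dim F=3$, i.e.\ $\kappa=0$, then $c_1(X)=0$. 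The Beauville--Bogomolov decomposition gives a torus factor (excluded) or a Calabi--Yau/hyperk\"ahler factor.
\end{itemize}

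To exclude the K3 and Calabi--Yau cases I would use the balanced metric $\omega$. The key is an integral inequality: negative holomorphic sectional curvature, averaged over the unit sphere, gives a pointwise inequality relating the Chern Ricci forms. The average of $H$ over unit vectors is a positive combination of the Chern scalar curvature $s$ and the ``second'' scalar curvature $\hat s$ built from the Levi-Civita/Chern torsion. For balanced metrics, $\int_X(s-\hat s)\,\omega^n$ is a torsion term of definite sign (by Liu--Yang type formulas). Also $\int_X s\,\omega^n = n\int_X \rho\wedge\omega^{n-1}$ depends only on $c_1(X)\cdot[\omega^{n-1}]$, since $\omega^{n-1}$ is closed. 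Thus $c_1(X)\cdot[\omega^{2}]<0$, i.e.\ $K_X\cdot[\omega^2]>0$. This kills $c_1=0$. Applying the same argument on the fibres, or using the Aeppli class $[\omega^2]$ paired against $\phi^*$ of an ample class, should also exclude K3 fibrations. So $\kappa=3$ and $K_X$ is big, and $X$ is Moishezon and K\"ahler, hence projective.

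\textbf{Step 3: strict positivity.} The semiample nef and big $K_X$ defines a birational contraction to the canonical model. Any curve with $K_X\cdot C=0$ lies in its exceptional locus. Exceptional loci of crepant contractions from smooth varieties are covered by rational curves (Kawamata), contradicting Step 1. Hence $K_X$ is ample.

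The main obstacle is Step 2. One must extract $c_1(X)\cdot[\omega]^2<0$ from negative holomorphic sectional curvature of a merely balanced metric: the torsion terms in the averaged curvature have to come out with the right sign. One must also handle the K3-fibred case, where a global integral does not directly see the fibres. I would try first to derive the pointwise identity $\int_{S^{5}}H = c(s+\hat s)$ and the balanced integral identity for $\hat s$.
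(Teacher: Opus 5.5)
Your overall plan is the paper's: nefness from Theorem~\ref{thm:main}(1), abundance to make $K_X$ semiample, the balanced metric to exclude $\kappa(X)=0$, and Kawamata's theorem to upgrade nef and big to ample. The sign issue you flag as the main obstacle in the $\kappa=0$ case is not a real obstacle, and it is resolved exactly as in the paper. For any Hermitian metric, $\int_X(s_\omega-\widehat s_\omega)\,\omega^n=\int_X|\partial^*\omega|^2_\omega\,\omega^n$. In general this term has the \emph{wrong} sign, but it vanishes when $\omega$ is balanced. Hence $\int_X(s_\omega+\widehat s_\omega)\,\omega^3=2\int_X s_\omega\,\omega^3=6\int_X\operatorname{Ric}^{(1)}(\omega)\wedge\omega^2$. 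This is $0$ when $c_1(X)=0$ and $\omega^2$ is closed, yet it is $<0$ by Berger's averaging formula.

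The genuine gap is the case $\kappa(X)=1$ whose general fibre $F$ is a K3 or Enriques surface. You leave this case open, and neither tool you suggest can close it. The global pairing gives no contradiction. With $mK_X\simeq\phi^*A$ and $\deg A>0$, one has $K_X\cdot[\omega^2]=\frac{\deg A}{m}\int_F\omega^2>0$, which is fully consistent with the inequality $K_X\cdot[\omega^2]>0$ produced by the balanced argument. Rerunning the argument on $F$ fails too. The restriction $\omega|_F$ is not balanced (on a surface, balanced means K\"ahler). For the Gauduchon metric conformal to $\omega|_F$, the identity \eqref{eq:Gauduchon-weighted} with $c_1^{BC}(F)=0$ reads $0=\frac12\int_F f(s+\widehat s)\,\omega_F^2+\frac12\int_F|\partial_G^*\omega_G|^2\omega_G^2$. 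Here the nonnegative torsion term can exactly offset the negative curvature term, so there is no contradiction.

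The missing idea is that the balanced hypothesis is not needed for this case; your own Step~1 suffices once you add one input. Every K3 surface fails to be Kobayashi (equivalently Brody) hyperbolic, by Kamenova--Lu--Verbitsky. This includes non-projective K3 surfaces, which may contain no curves at all, so a rational-curve argument would not be enough. Take a nonconstant entire curve in the K3 (or torus) \'etale cover of $F$ and compose it with the covering map and the inclusion $F\subset X$. This gives a nonconstant entire curve in $X$, contradicting Step~1. The paper packages the same fact differently. It notes that $\omega|_F$ has negative holomorphic sectional curvature by the Gauss equation and applies Corollary~\ref{cor1.1}, so $K_F$ is ample, contradicting $mK_F\simeq\mathcal O_F$. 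The proof of Corollary~\ref{cor1.1} relies on the same K3 non-hyperbolicity result. With this fix, the rest of your argument goes through.
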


The proof combines Theorem 1.1(1) with the recent log abundance theorem
for compact K\"{a}hler threefolds \cite{DasOu1,DasOu2}. The balanced condition rules out the Kodaira dimension zero case, while the remaining intermediate Kodaira dimensions are excluded by restricting the metric to general fibers of the Iitaka fibration.

We next turn to the vanishing case. The following theorem provides a partial converse to Theorem~\ref{thm:main}(2) under the additional assumption that the Hermitian metric have nonnegative holomorphic sectional curvature. However, it does not require any K\"ahler assumption.

\begin{thm}\label{thm1.2}
Let $X$ be a compact complex manifold with $c_1^{BC}(X)=0$. Then every Hermitian metric $h$ with nonnegative holomorphic sectional curvature has identically vanishing holomorphic sectional curvature.
\end{thm}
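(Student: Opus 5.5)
The plan is to turn the pointwise inequality $H\ge 0$ into an integral inequality by averaging over unit spheres, and then show that $c_1^{BC}(X)=0$ forces that integral to be $\le 0$. Equality will then give $H\equiv 0$.

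\textbf{Step 1 (averaging).} Let $\omega$ be the fundamental form of $h$, with Chern curvature $R$, and write $H(v)=R(v,\bar v,v,\bar v)/|v|^4$. The standard Berger-type computation averages $H$ over the unit sphere of $T_xX$ and gives
\[
\int_{S_x} H \, d\sigma = \frac{1}{n(n+1)}\bigl(s(x)+\hat s(x)\bigr).
\]
Here $s=\sum_{i,j}R_{i\bar i j\bar j}$ is the Chern scalar curvature, which is the trace of the first Chern–Ricci form $\mathrm{Ric}^{(1)}$. The second term $\hat s=\sum_{i,j}R_{i\bar j j\bar i}$ is the "other" trace, in a unitary frame. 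So $H\ge 0$ gives $s+\hat s\ge 0$ pointwise, with equality everywhere if and only if $H\equiv 0$. It therefore suffices to find a positive weight $\rho$ with
\[
\int_X (s+\hat s)\,\rho\,\omega^n\le 0 .
\]

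\textbf{Step 2 (using $c_1^{BC}=0$).} By hypothesis $\mathrm{Ric}^{(1)}(\omega)=i\partial\bar\partial f$ for a smooth real function $f$. Choose a Gauduchon metric $\omega_G=e^{u}\omega$ in the conformal class of $\omega$, so that $\partial\bar\partial\omega_G^{n-1}=0$. Since $\mathrm{Ric}^{(1)}(\omega_G)=i\partial\bar\partial(f-nu)$ is $\partial\bar\partial$-exact, integrating by parts against the closed form gives
\[
\int_X s_G\,\omega_G^n=n\int_X \mathrm{Ric}^{(1)}(\omega_G)\wedge\omega_G^{n-1}=0 .
\]
Under the conformal change the curvature transforms as
\[
\tilde R(v,\bar v,v,\bar v)=e^{u}\bigl(R(v,\bar v,v,\bar v)-i\partial\bar\partial u(v,\bar v)\,|v|^2\bigr).
\]
Hence $s_G+\hat s_G$ equals $e^{-u}(s+\hat s)$ minus a multiple of $e^{-u}\mathrm{tr}_\omega i\partial\bar\partial u$. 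When integrated against $\omega_G^{n}$, this extra term should become a $\partial\bar\partial$-exact form wedged with $\omega_G^{n-1}$ plus torsion terms; I will track these explicitly.

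\textbf{Step 3 (main obstacle: the $\hat s$ term).} The crucial input is a Liu–Yang type identity comparing $\hat s$ with $s$ for a Hermitian metric:
\[
\hat s = s - |\partial^*\omega|^2\text{-type torsion terms} + \text{divergence terms}.
\]
For a Gauduchon metric, I expect the divergence terms to integrate to zero, leaving
\[
\int_X \hat s_G\,\omega_G^n \le \int_X s_G\,\omega_G^n = 0 .
\]
Combined with Step 2, this would give $\int_X(s+\hat s)\,\rho\,\omega^n\le 0$ for the weight $\rho=e^{(n-1)u}$ coming from the conformal factor, after absorbing the $\partial\bar\partial u$ contributions. Getting the signs and the weights exactly right in these two integrations by parts is the step I expect to be delicate. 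If the conformal bookkeeping fails, my first alternative is to work directly with $\omega$, using the $\partial\bar\partial$-closed weight $\omega_G^{n-1}$ in place of $\omega^{n-1}$. In that version I would apply the averaging of Step 1 to the hermitian form $i\partial\bar\partial$ traced against $\omega_G^{n-1}$.

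\textbf{Step 4 (conclusion).} Since $s+\hat s\ge 0$ pointwise and its weighted integral is $\le 0$, we get $s+\hat s\equiv 0$. Then the spherical average of the nonnegative continuous function $H$ vanishes at every point, so $H\equiv 0$.
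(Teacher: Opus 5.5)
Your proposal is correct and is essentially the paper's argument. The paper quotes from Yang the identity $\int_X s_G\,\omega_G^n=\frac12\int_X f(s_h+\widehat{s}_h)\,\omega^n+\frac12\int_X|\partial_G^*\omega_G|^2_{\omega_G}\,\omega_G^n$ with $f=e^{(n-1)u}$, which is exactly what your Steps 2--3 assemble from the conformal change formula and the Liu--Yang identity; it then makes the left side vanish using $c_1^{BC}(X)=0$ and the Gauduchon condition, as you do, and your bookkeeping closes up as stated: the $\Delta_\omega u$ term integrates to a multiple of $\int_X u\,\sqrt{-1}\partial\bar\partial\omega_G^{n-1}=0$ with no torsion leftovers, and $\int_X(s-\widehat{s})\,\omega^n=\int_X|\partial^*\omega|^2\,\omega^n\geq 0$ holds for every Hermitian metric, so your weight $\rho=e^{(n-1)u}$ works.
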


Combining the two theorems with the
$\partial\bar\partial$-lemma, we obtain the following characterization
in the compact K\"ahler setting.
\begin{cor}\label{cor1.2}
Let $X$ be a compact K\"ahler manifold and let $h$ be a Hermitian
metric on $X$. Then
\[
H_h\equiv0
\quad\Longleftrightarrow\quad
c_1(X)=0
\ \text{and}\ H_h\geq0.
\]
\end{cor}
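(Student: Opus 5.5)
The plan is to obtain the two implications of Corollary~\ref{cor1.2} from Theorem~\ref{thm:main}(2) and Theorem~\ref{thm1.2}. The only extra ingredient is the identification of the Bott--Chern class with the de Rham class, which holds on a compact K\"ahler manifold by the $\partial\bar\partial$-lemma.

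For the forward direction, assume $H_h\equiv 0$. Then $H_h\ge 0$ holds trivially. Since $X$ is compact K\"ahler and the holomorphic sectional curvature of $h$ vanishes, Theorem~\ref{thm:main}(2) gives $c_1(X)=0$ directly. Nothing more is needed for this direction.

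For the converse, assume $c_1(X)=0$ and $H_h\ge 0$. To apply Theorem~\ref{thm1.2} we need $c_1^{BC}(X)=0$, so the main step is to upgrade vanishing of the real first Chern class to vanishing of the Bott--Chern class.

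1. Take any Hermitian metric on $X$ (for instance $h$ itself or a K\"ahler metric $\omega$). Its Chern--Ricci form $\mathrm{Ric}=-\sqrt{-1}\partial\bar\partial\log\det(g)$ is a real, closed $(1,1)$-form. Its Bott--Chern class is $c_1^{BC}(X)$ and its de Rham class is $c_1(X)$, understood as a real class, up to the usual factor of $2\pi$.
2. Since $c_1(X)=0$ in $H^2(X,\mathbb{R})$, the form $\mathrm{Ric}$ is $d$-exact.
3. On a compact K\"ahler manifold the $\partial\bar\partial$-lemma applies. A $d$-closed, $d$-exact form of pure type $(1,1)$ is therefore $\partial\bar\partial$-exact, so $\mathrm{Ric}=\sqrt{-1}\partial\bar\partial f$ for a smooth real function $f$.
4. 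Hence $c_1^{BC}(X)=0$. Equivalently, the natural map $H^{1,1}_{BC}(X,\mathbb{R})\to H^2(X,\mathbb{R})$ is injective on a compact K\"ahler manifold.

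With $c_1^{BC}(X)=0$ established, Theorem~\ref{thm1.2} applies to $h$: since $H_h\ge 0$, it gives $H_h\equiv 0$.

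The only point requiring care is the cohomological identification in steps 1--4. One must be clear that $c_1(X)=0$ means vanishing in real (de Rham) cohomology, since torsion in integral cohomology is irrelevant to both theorems. Reality of $f$ is arranged by taking the real part of a solution. Beyond this, the argument is a direct combination of the earlier results.
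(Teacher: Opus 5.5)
Your proposal is correct and is exactly the paper's argument: the forward implication comes from Theorem~\ref{thm:main}(2), with $H_h\geq0$ trivial, and the converse upgrades $c_1(X)=0$ to $c_1^{BC}(X)=0$ via the $\partial\bar\partial$-lemma on the compact K\"ahler manifold and then applies Theorem~\ref{thm1.2}. Your remarks on choosing a real potential and working in real cohomology just spell out details the paper leaves implicit.
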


\begin{rem} The nonnegativity assumption in Corollary~\ref{cor1.2} is essential.
Indeed, as shown in Example~\ref{ex:K3}, there exists a compact
K\"ahler surface with $c_1(X)=0$ which admits no Hermitian metric
with vanishing holomorphic sectional curvature.
\end{rem}

This paper is organized as follows. In section 2, we provide some basic knowledge which will be used in our proofs. In section 3, we prove the main results.

\section{Preliminaries}
\subsection{Curvatures of Hermitian metrics}
Let $X$ be a compact complex manifold of
complex dimension $n$. Let $g$ be a Hermitian metric on $X$. In local
holomorphic coordinates $(z^1,\ldots,z^n)$, we write
\[
g=g_{i\bar j}\,dz^i\otimes d\bar z^j,
\]
and denote by
\[
\omega=\sqrt{-1}\,g_{i\bar j}\,dz^i\wedge d\bar z^j
\]
its associated fundamental $(1,1)$-form. We shall often identify $g$
with $\omega$ when no confusion can arise. The Hermitian metric $g$
is called K\"ahler if $d\omega=0$. Let $\nabla$ be the Chern connection of $(T^{1,0}X,g)$. Its curvature
tensor is locally given by
\[
R_{i\bar j k\bar\ell}
=
-\frac{\partial^2g_{k\bar\ell}}
{\partial z^i\partial\bar z^j}
+
g^{p\bar q}
\frac{\partial g_{k\bar q}}{\partial z^i}
\frac{\partial g_{p\bar\ell}}{\partial\bar z^j}.
\]
For a nonzero vector
$\xi=\xi^i\frac{\partial}{\partial z^i}\in T^{1,0}_xX$, the holomorphic sectional curvature is defined by
\[
H_g(\xi)
=
\frac{
R_{i\bar j k\bar\ell}
\xi^i\overline{\xi^j}\xi^k\overline{\xi^\ell}
}{
|\xi|_g^4
}.
\]

On a general Hermitian manifold, there are four natural Chern--Ricci
curvatures:
\[
\begin{aligned}
R^{(1)}_{i\bar j}
&=g^{k\bar\ell}R_{i\bar j k\bar\ell},&
R^{(2)}_{i\bar j}
&=g^{k\bar\ell}R_{k\bar\ell i\bar j},\\
R^{(3)}_{i\bar j}
&=g^{k\bar\ell}R_{i\bar\ell k\bar j},&
R^{(4)}_{i\bar j}
&=g^{k\bar\ell}R_{k\bar j i\bar\ell}.
\end{aligned}
\]
The first Chern-Ricci form satisfies
\[
Ric^{(1)}(\omega)
=
\sqrt{-1}\,R^{(1)}_{i\bar j}\,dz^i\wedge d\bar z^j
=
-\sqrt{-1}\,\partial\bar\partial\log\det(g_{i\bar j}),
\]
which is a $(1,1)$-form representing the first Chern class $c_{1}(X)$.
The two Chern scalar curvatures are
\begin{align}
s_g
=g^{i\bar j}g^{k\bar\ell}R_{i\bar j k\bar\ell},\,\,\,\,\,\,\,
\widehat s_g=g^{i\bar\ell}g^{k\bar j}R_{i\bar j k\bar\ell}.
\end{align}
Equivalently, $s_g$ is the trace of either $R^{(1)}$ or $R^{(2)}$,
whereas $\widehat s_g$ is the trace of either $R^{(3)}$ or
$R^{(4)}$. The total scalar curvature is
\begin{align}
\int_{X}s_{g}\omega^{n}=n\int_{X}Ric(\omega)\wedge \omega^{n-1}.
\end{align}

The Berger averaging formula \cite{Berger1966} gives
\[
\int_{\{\xi\in T^{1,0}_xX:\,|\xi|_g=1\}}
H_g(\xi)\,d\theta(\xi)
=
\frac{s_g+\widehat s_g}{n(n+1)}.
\]
In particular,
\[
H_g\geq0
\quad\Longrightarrow\quad
s_g+\widehat s_g\geq0.
\]

If $g$ is K\"ahler, then the Chern connection agrees with the
Levi-Civita connection and
\[
R_{i\bar j k\bar\ell}
=
R_{k\bar j i\bar\ell}
=
R_{i\bar\ell k\bar j}.
\]
Consequently,
\[
R^{(1)}=R^{(2)}=R^{(3)}=R^{(4)}
\qquad\text{and}\qquad
s_g=\widehat s_g.
\]
Moreover, the K\"ahler curvature tensor is determined by its
holomorphic sectional curvature.

\subsection{Special Hermitian metrics}

A Hermitian metric $\omega$ is called Gauduchon if
\[
\partial\bar\partial\omega^{n-1}=0,
\]
balanced if
\[
d\omega^{n-1}=0,
\]
and pluriclosed if
\[
\partial\bar\partial\omega=0.
\]
It is proved by Gauduchon \cite{Gauduchon1977}, that, in the conformal class of each Hermitian metric, there exists a unique Gauduchon metric (up to a positive constant scaling).

For a Hermitian metric $\omega$, one has
\[
\int_X(s_\omega-\widehat s_\omega)\,\omega^n
=
\int_X|\partial^*\omega|_\omega^2\,\omega^n.
\]
In particular, if $\omega$ is balanced, then
$\partial^*\omega=0$, and hence
\[
\int_X (s_\omega-\widehat{s}_\omega)\omega^n=0.
\].

\subsection{Line bundles and Bott-Chern classes}

Let $L\to X$ be a holomorphic line bundle. If $h$ is a smooth
Hermitian metric on $L$, its Chern curvature is denoted by
$\Theta_h(L)$. The first Chern class is
\[
c_1(L)
=
\left[
\frac{\sqrt{-1}}{2\pi}\Theta_h(L)
\right]
\in H^2(X,\mathbb R).
\]
The canonical bundle is
\[
K_X=\Lambda^n(T^{1,0}X)^*,
\]
and
\[
c_1(X)=c_1(K_X^{-1})=-c_1(K_X).
\]
Fix a K\"ahler form
$\omega_0$. A holomorphic line bundle $L$ is called nef if, for every
$\varepsilon>0$, there exists a smooth Hermitian metric
$h_\varepsilon$ on $L$ such that
\[
\frac{\sqrt{-1}}{2\pi}\Theta_{h_\varepsilon}(L)
\geq-\varepsilon\omega_0.
\]
It is called pseudoeffective if $c_1(L)$ contains a closed positive
$(1,1)$-current. A line bundle $L$ is called big if $\kappa(L)=\dim X$.
Equivalently, the class $c_1(L)$ contains a K\"ahler current.
For K\"{a}hler manifolds, if $L$ is nef, then $L$ is big if and only if
\[
\int_Xc_1(L)^n>0.
\]
It is called ample if some positive tensor
power defines a holomorphic embedding into projective space;
equivalently, $L$ admits a smooth Hermitian metric with positive
curvature.

The Kodaira dimension of $L$ is defined by
\[
\kappa(L)
=
\begin{cases}
-\infty,
&
H^0(X,L^{\otimes m})=0
\text{ for every }m\geq1,\\[4pt]
\displaystyle
\limsup_{m\to\infty}
\frac{\log h^0(X,L^{\otimes m})}{\log m},
&
\text{otherwise}.
\end{cases}
\]
The Kodaira dimension of $X$ is
\[
\kappa(X)=\kappa(K_X).
\]

We also recall some standard facts concerning semiample line bundles.
A holomorphic line bundle $L$ on a compact complex manifold $X$ is
called \emph{semiample} if $L^m$ is globally generated for some
positive integer $m$. If $L$ is semiample, then, for $m>0$ sufficiently
divisible, it induces a holomorphic map
\[
\varphi:X\longrightarrow Y
\]
with connected fibers such that
\[
L^m\simeq\varphi^*A
\]
for some ample line bundle $A$ on $Y$, and
\[
\dim Y=\kappa(X,L).
\]
In particular, if $\kappa(X,L)=0$, then
\[
L^m\simeq\mathcal O_X
\]
for some $m>0$. When $L=K_X$, we refer to $\varphi$ as the Iitaka
fibration.

The real Bott-Chern cohomology group is
\[
H^{1,1}_{BC}(X,\mathbb R)
=
\frac{
\{\alpha\in A^{1,1}(X,\mathbb R):d\alpha=0\}
}{
\{\sqrt{-1}\partial\bar\partial\varphi:
\varphi\in C^\infty(X,\mathbb R)\}
}.
\]
The first Bott-Chern class of $L$ is
\[
c_1^{BC}(L)
=
\left[
\frac{\sqrt{-1}}{2\pi}\Theta_h(L)
\right]_{BC},
\]
which is independent of the choice of $h$. We set
\[
c_1^{BC}(X)=c_1^{BC}(K_X^{-1}).
\]
In particular,
\[
\frac{1}{2\pi}Ric^{(1)}(\omega)
\in c_1^{BC}(X).
\]

There is a natural map
\[
H^{1,1}_{BC}(X,\mathbb R)
\longrightarrow H^2(X,\mathbb R),
\]
and hence
\[
c_1^{BC}(X)=0
\quad\Longrightarrow\quad
c_1(X)=0.
\]
If $X$ is  a compact K\"ahler manifold, the $\partial\bar\partial$-lemma implies
the converse, so that
\[
c_1^{BC}(X)=0
\quad\Longleftrightarrow\quad
c_1(X)=0.
\]

Finally, a rational curve on $X$ is the image of a nonconstant
holomorphic map from $\mathbb P^1$ to $X$, and $X$ is called
uniruled if it is covered by rational curves. $X$ is said to be Brody hyperbolic if every
holomorphic map $\mathbb{C}\rightarrow X$ is constant. The celebrated Brody criterion \cite{Brody,Kobayashi1998} asserts that $X$ is Kobayashi hyperbolic if and only if it is Brody hyperbolic.

\section{Proofs of the main results}
In this section, we present the proofs of Theorem~\ref{thm:main},
Corollary~\ref{cor1.1}, Corollary~\ref{cor:threefold} and Theorem~\ref{thm1.2}.

\begin{proof}[Proof of Theorem~\ref{thm:main}]

\noindent\emph{Proof of \emph{(1)}.}
We first show that $X$ contains no rational curves. This fact is
essentially a rank-one consequence of Schwarz lemma arguments of
Yau-Royden type \cite{YauSchwarz,Royden}; for completeness, we give the local Chern--Lu
calculation.
We assume that there exists a nonconstant holomorphic
map
\[
f:\mathbb P^1\longrightarrow X.
\]
Choose a local holomorphic coordinate
$z^1$ on $\mathbb P^1$ and local holomorphic coordinates
$(w^1,\ldots,w^n)$ on $X$. Let $g$ be a Fubini--Study metric on $\mathbb P^1$, normalized so that
its Ricci curvature satisfies
\[
R^g_{1\bar1}=\tau g_{1\bar1}
\]
for some constant $\tau>0$.  Write
\[
f^\alpha=w^\alpha\circ f,
\qquad
f^\alpha_1=\frac{\partial f^\alpha}{\partial z^1},
\]
and define
\[
u=\operatorname{tr}_{g}f^*h
  =g^{1\bar1}h_{\alpha\bar\beta}
    f^\alpha_1\overline{f^\beta_1}.
\]
Clearly, $u\geq0$, and $u\equiv0$ if and only if $f$ is constant.

The Chern-Lu formula gives
\begin{align}
\Delta_g u
=|\nabla df|^2
+(g^{1\bar1})^2R^g_{1\bar1}
 h_{\alpha\bar\beta}
 f^\alpha_1\overline{f^\beta_1}-(g^{1\bar1})^2
R^h_{\alpha\bar\beta\gamma\bar\delta}
f^\alpha_1\overline{f^\beta_1}
f^\gamma_1\overline{f^\delta_1}.
\label{eq:Chern-Lu-P1}
\end{align}
The source curvature term is
\[
(g^{1\bar1})^2R^g_{1\bar1}
h_{\alpha\bar\beta}
f^\alpha_1\overline{f^\beta_1}
=
\tau u.
\]
At a point where $df\neq0$, set
\[
\xi
=
f^\alpha_1\frac{\partial}{\partial w^\alpha}
\in T^{1,0}X.
\]
Then
\begin{align*}
(g^{1\bar1})^2
R^h_{\alpha\bar\beta\gamma\bar\delta}
f^\alpha_1\overline{f^\beta_1}
f^\gamma_1\overline{f^\delta_1}=(g^{1\bar1})^2
R^h(\xi,\bar\xi,\xi,\bar\xi)
=H_h(\xi)
 \bigl(g^{1\bar1}|\xi|_h^2\bigr)^2
=H_h(\xi)u^2.
\end{align*}
Since $H_h\leq0$, the last curvature contraction in
\eqref{eq:Chern-Lu-P1} is nonpositive. Consequently,
\[
\Delta_g u
=
|\nabla df|^2+\tau u-H_h(\xi)u^2
\geq \tau u.
\]
Integrating over $\mathbb P^1$, we obtain
\[
0
=
\int_{\mathbb P^1}\Delta_g u\,dV_g
\geq
\tau\int_{\mathbb P^1}u\,dV_g.
\]
Hence $u\equiv0$, contradicting the nonconstancy of $f$. Thus $X$
contains no rational curves.

We now apply two results from the birational geometry of compact
K\"ahler manifolds. Ou \cite{Ou} proved that a compact K\"ahler manifold is
uniruled if and only if its canonical bundle is not
pseudoeffective. Since $X$ contains no rational curves, it
is not uniruled, and therefore $K_X$ is pseudoeffective.

Suppose that $K_X$ were not nef. Cao-H\"oring \cite[Theorem~1.3]{CaoHoring} proved that, for an
$n$-dimensional compact K\"ahler manifold whose canonical bundle is
pseudoeffective but not nef, there exists a $K_X$-negative rational
curve, provided that the equivalence
\[
K_Y\ \text{is pseudoeffective}
\quad\Longleftrightarrow\quad
Y\ \text{is not uniruled}
\]
holds for compact K\"ahler manifolds $Y$ of dimension at most
$n-1$. Since Ou's theorem establishes
this equivalence in every dimension, the hypothesis of the
Cao-H\"oring theorem is satisfied. Hence there exists a nonconstant
holomorphic map
\[
\varphi:\mathbb P^1\longrightarrow X
\]
such that
\[
K_X\cdot\varphi(\mathbb P^1)<0.
\]
This contradicts the absence of rational curves proved above.
Therefore, $K_X$ is nef.

\medskip
\noindent\emph{Proof of \emph{(2)}.}
Assume now that
\[
H_h\equiv0,
\]
and let $\omega$ be the fundamental form associated with $h$. Let
\[
\omega_G=f^{\frac{1}{n-1}}\omega,
\qquad f>0,
\]
be the Gauduchon metric in the conformal class of $\omega$. By the
Berger averaging formula,
\[
s_h+\widehat{s}_h=0.
\]
The conformal Gauduchon identity
\cite[Equation~(3.8)]{Yang2016} gives
\begin{equation}\label{eq:Gauduchon-weighted}
\int_Xs_G\,\omega_G^n
=
\frac12\int_X
f\bigl(s_h+\widehat{s}_h\bigr)\omega^n
+
\frac12\int_X
\left|\partial_G^*\omega_G\right|_{\omega_G}^2
\omega_G^n.
\end{equation}
It follows that
\begin{equation}\label{eq:total-scalar-nonnegative}
\int_Xs_G\,\omega_G^n
=
\frac12\int_X
\left|\partial_G^*\omega_G\right|_{\omega_G}^2
\omega_G^n
\geq0.
\end{equation}

By part \emph{(1)}, $K_X$ is nef. Fix a K\"ahler form $\omega_0$ on
$X$. For every $\varepsilon>0$, there exists a smooth Hermitian metric
$k_\varepsilon$ on $K_X$ whose normalized curvature form
\[
\alpha_\varepsilon
=
\frac{\sqrt{-1}}{2\pi}
\Theta_{k_\varepsilon}(K_X)
\]
satisfies
\[
\alpha_\varepsilon\geq-\varepsilon\omega_0.
\]
Both $\alpha_\varepsilon$ and $-\frac{1}{2\pi}\operatorname{Ric}^{(1)}(\omega_G)$ represent $c_1(K_X)$. Since $\omega_G$ is Gauduchon metric,
\begin{align}
\int_X\alpha_\varepsilon\wedge\omega_G^{n-1}=
-\frac{1}{2\pi}
\int_X\operatorname{Ric}^{(1)}(\omega_G)
       \wedge\omega_G^{n-1}=-\frac{1}{2\pi n}
\int_Xs_G\,\omega_G^n.
\label{eq:nef-pairing}
\end{align}
The inequality
$\alpha_\varepsilon\geq-\varepsilon\omega_0$ therefore implies
\[
-\frac{1}{2\pi n}
\int_Xs_G\,\omega_G^n
\geq
-\varepsilon
\int_X\omega_0\wedge\omega_G^{n-1}.
\]
Equivalently,
\[
\int_Xs_G\,\omega_G^n
\leq
2\pi n\varepsilon
\int_X\omega_0\wedge\omega_G^{n-1}.
\]
Letting $\varepsilon\to0$, we obtain
\[
\int_Xs_G\,\omega_G^n\leq0.
\]
Combining this with \eqref{eq:total-scalar-nonnegative}, we conclude
that
\[
\int_Xs_G\,\omega_G^n=0.
\]

It remains to show that the first Chern class vanishes. From
\eqref{eq:nef-pairing}, we now have
\[
\int_X\alpha_\varepsilon\wedge\omega_G^{n-1}=0.
\]
Set
\[
\beta_\varepsilon
=
\alpha_\varepsilon+\varepsilon\omega_0.
\]
Then $\beta_\varepsilon$ is a smooth closed semipositive $(1,1)$-form,
and
\[
\int_X\beta_\varepsilon\wedge\omega_G^{n-1}
=
\varepsilon
\int_X\omega_0\wedge\omega_G^{n-1}
\longrightarrow0.
\]
Since $\omega_G^{n-1}$ is strictly positive, the mass of the positive
current $\beta_\varepsilon$ tends to zero. Therefore,
\[
\beta_\varepsilon\longrightarrow0
\]
in the sense of currents. Hence
\[
\alpha_\varepsilon
=
\beta_\varepsilon-\varepsilon\omega_0
\longrightarrow0
\]
in the sense of currents.

On the other hand, every $\alpha_\varepsilon$ represents the fixed
de Rham class $c_1(K_X)$. Thus, for every smooth closed real
$(2n-2)$-form $\Psi$,
\[
\left\langle c_1(K_X),[\Psi]\right\rangle
=
\int_X\alpha_\varepsilon\wedge\Psi
\longrightarrow0.
\]
By Poincar\'e duality,
\[
c_1(K_X)=0\in H^2(X,\mathbb R).
\]
Consequently,
\[
c_1(X)=-c_1(K_X)=0.
\]
The Calabi-Yau theorem \cite{YauCalabi} then yields a Ricci-flat K\"ahler metric on
$X$.
\end{proof}

\begin{proof}[Proof of Corollary~\ref{cor1.1}]
By Theorem~\ref{thm:main}(1), the canonical bundle $K_X$ is nef.
Moreover, by the Schwarz lemma \cite{YauSchwarz}, the negativity of the holomorphic sectional curvature implies that $X$ is Kobayashi hyperbolic.

We first show that $X$ is of general type. By the
Enriques--Kodaira classification \cite[Chapter~VI]{BHPV},
the case $\kappa(X)=-\infty$ would imply that $X$ is rational or
ruled, and hence contains a rational curve, contradicting
hyperbolicity. If $\kappa(X)=1$, then the Iitaka fibration has a
general fiber which is an elliptic curve, again a contradiction. If
$\kappa(X)=0$, then a finite \'etale cover of $X$ is either a complex
torus or a K3 surface. Neither is Kobayashi hyperbolic; for the K3
case, see \cite{KLV}. Hence
\[
\kappa(X)=2.
\]
Thus $K_X$ is big. Since it is also nef,
\[
K_X^2>0.
\]
Moreover, the bigness of $K_X$ implies that $X$ is Moishezon, and
hence projective since $X$ is K\"ahler.

Let $C\subset X$ be an irreducible curve. By nefness,
\[
K_X\cdot C\geq0.
\]
Suppose that $K_X\cdot C=0$. Since $K_X^2>0$ and $K_X\cdot C=0$, the Hodge index theorem shows
that $C^2<0$, as the intersection form is negative definite on the
orthogonal complement of $K_X$.
By the adjunction formula,
\[
2p_a(C)-2
=
(K_X+C)\cdot C
=
C^2<0.
\]
It follows that
\[
p_a(C)=0
\qquad\text{and}\qquad
C^2=-2.
\]
Since $p_a(C)=0$, the normalization of $C$ is  $\mathbb{P}^{1}$, so $C$ is a rational curve, contradicting the Kobayashi hyperbolicity. Therefore,
\[
K_X\cdot C>0
\]
for every irreducible curve $C\subset X$.

Since $K_X^2>0$, the Nakai-Moishezon criterion
\cite{Lazarsfeld} implies that $K_X$ is ample.
\end{proof}

\begin{proof}[Proof of Corollary~\ref{cor:threefold}]
By Theorem~1.1(1), the canonical bundle $K_X$ is nef. By the abundance
theorem for compact K\"ahler threefolds
\cite{DasOu1,DasOu2}, $K_X$ is semiample. We first show that
\[
\kappa(X)=3.
\]

Suppose first that $\kappa(X)=0$. Since $K_X$ is semiample, for some
$m>0$ sufficiently divisible, $|mK_X|$ is base-point free. Since
$\kappa(X)=0$, we have
\[
mK_X\simeq\mathcal O_X.
\]
Hence
\[
c_1^{BC}(X)=0.
\]

Let $\omega$ be the given balanced Hermitian metric. Since $\omega$ is
balanced, it is Gauduchon and
\[
\int_X(s_\omega-\widehat{s}_\omega)\omega^3=0.
\]
Moreover,
\[
\int_Xs_\omega\omega^3
=
3\int_X Ric^{(1)}(\omega)\wedge\omega^2=0,
\]
because $c_1^{BC}(X)=0$ and $d\omega^2=0$. Therefore,
\[
\int_X(s_\omega+\widehat{s}_\omega)\omega^3=0.
\]
On the other hand, the Berger averaging formula and the negativity of
the holomorphic sectional curvature imply
\[
s_\omega+\widehat{s}_\omega<0,
\]
a contradiction. Thus
\[
\kappa(X)\neq0.
\]

Now suppose that $\kappa(X)=1$ or $2$. Since $K_X$ is semiample, for some
sufficiently divisible integer $m>0$, the Iitaka fibration is given by
\[
\varphi:X\longrightarrow Y,
\]
where
\[
mK_X\simeq\varphi^*A
\]
for some ample line bundle $A$ on $Y$, and
\[
\dim Y=\kappa(X).
\]
Let $F$ be a general smooth fiber of $\varphi$. Then
\[
mK_X|_F\simeq\mathcal O_F.
\]
Since $F$ is a fiber of $\varphi$, its normal bundle in $X$ is trivial.
Hence, by the adjunction formula,
\[
K_F\simeq K_X|_F,
\]
and therefore
\[
mK_F\simeq\mathcal O_F.
\]

By the Chern--Gauss equation, the Hermitian metric induced by $\omega$
on $F$ still has negative holomorphic sectional curvature.

If $\kappa(X)=2$, then $F$ is a compact Riemann surface. Since
$mK_F\simeq\mathcal O_F$, we have
\[
\deg K_F=0,
\]
and hence $F$ is an elliptic curve, contradicting the Gauss--Bonnet
theorem.

If $\kappa(X)=1$, then $F$ is a compact K\"ahler surface. By
Corollary~1.1, $K_F$ is ample, contradicting
\[
mK_F\simeq\mathcal O_F.
\]
Therefore
\[
\kappa(X)=3.
\]

Consequently, $K_X$ is nef and big. Hence $X$ is Moishezon, and therefore
projective since $X$ is K\"ahler. Moreover, as in the proof of
Theorem~1.1(1), the negativity of the holomorphic sectional curvature
implies that $X$ contains no rational curves.

Since $K_X$ is nef and big, the base-point-free theorem gives a morphism
\[
\psi:X\longrightarrow Z
\]
for some sufficiently divisible $m>0$ such that
\[
mK_X\simeq\psi^*A,
\]
where $A$ is ample on $Z$. If $\psi$ were not finite, then there would
exist a positive-dimensional fiber. Since $mK_X=\psi^*A$, the canonical
bundle $K_X$ is numerically trivial on the fibers of $\psi$. By
Kawamata's theorem \cite[Theorem~2]{Kawamata91}, such a fiber contains a
rational curve, contradicting the negativity of the holomorphic
sectional curvature. Hence $\psi$ is finite.

It follows that $\psi^*A$ is ample. Therefore $mK_X$, and hence $K_X$,
is ample.
\end{proof}

\begin{proof}[Proof of Theorem~\ref{thm1.2}]
Let $\omega$ be the fundamental form of $h$, and let
\[
\omega_G=f^{\frac{1}{n-1}}\omega,
\qquad f>0,
\]
be the Gauduchon metric in its conformal class. As in the proof of
Theorem~\ref{thm:main}, we have
\[
\int_X s_G\,\omega_G^n
=
\frac12\int_X f\bigl(s_h+\widehat{s}_h\bigr)\omega^n
+
\frac12\int_X
\left|\partial_G^*\omega_G\right|_{\omega_G}^2\omega_G^n.
\tag{3.5}
\]
Since $H_h\geq0$, the Berger averaging formula gives
\[
s_h+\widehat{s}_h\geq0.
\]
Hence the right-hand side of \textup{(3.5)} is nonnegative.

On the other hand, the assumption $c_1^{BC}(X)=0$ implies that
there exists a smooth real-valued function $u$ such that
\[
\operatorname{Ric}^{(1)}(\omega_G)
=
\sqrt{-1}\,\partial\bar\partial u.
\]
Since $\omega_G$ is Gauduchon,
\[
\begin{aligned}
\int_X s_G\,\omega_G^n
&=
n\int_X
\operatorname{Ric}^{(1)}(\omega_G)\wedge\omega_G^{n-1}  \\
&=
n\int_X
\sqrt{-1}\,\partial\bar\partial u\wedge\omega_G^{n-1} \\
&=
n\int_X
u\sqrt{-1}\,\partial\bar\partial\omega_G^{n-1}
=0.
\end{aligned}
\]
It follows from \textup{(3.5)} that
\[
\int_X f\bigl(s_h+\widehat{s}_h\bigr)\omega^n=0.
\]
Since $f>0$ and $s_h+\widehat{s}_h\geq0$, we obtain
\[
s_h+\widehat{s}_h\equiv0.
\]
Applying the Berger averaging formula pointwise, and using
$H_h\geq0$, we conclude that
\[
H_h\equiv0.
\]
\end{proof}

We conclude with a classical example showing that the nonnegativity
assumption in Corollary~\ref{cor1.2} is essential: the condition
$c_1(X)=0$ alone does not guarantee the existence of a Hermitian
metric with vanishing holomorphic sectional
curvature.
\begin{exam}\label{ex:K3}
Consider the Fermat quartic  K3 surface (see \cite{BHPV})
\[
X=\left\{
[z_0:z_1:z_2:z_3]\in\mathbb P^3
\,\middle|\,
z_0^4+z_1^4+z_2^4+z_3^4=0
\right\}.
\]
The surface $X$  is a smooth projective surface in $\mathbb P^3$;
in particular, it is compact. and the adjunction formula gives
\[
K_X
\simeq
\left(K_{\mathbb P^3}\otimes\mathcal O_{\mathbb P^3}(4)\right)|_X
\simeq
\mathcal O_X.
\]
Thus $X$ is a K3 surface and $c_1(X)=0$. Choose $\zeta,\eta\in\mathbb C$ such that
$\zeta^4=\eta^4=-1$. Then the line
\[
L=\{z_0=\zeta z_1,\ z_2=\eta z_3\}\subset\mathbb P^3
\]
is contained in $X$. Hence $X$ contains a rational curve
$L\simeq\mathbb P^1$.

If $X$ admitted a Hermitian metric $h$ with $H_h\equiv0$, then the
argument in the proof of Theorem~\ref{thm:main}(1) would imply that
$X$ contains no rational curves, a contradiction. Therefore, $X$
does not admit any Hermitian metric with vanishing holomorphic sectional curvature.
\end{exam}

\vspace{0.5cm}
\noindent\textbf{Acknowledgement.} The author is grateful to Professor Fangyang Zheng for constant encouragement and support. The author would like to thank OpenAI's ChatGPT-5.5 for helpful
suggestions and assistance in exploring the proof strategy of
Corollary ~\ref{cor:threefold}.
\vspace{0.5cm}
%% \noindent\textbf{Data availability.} Data sharing not applicable to this article as no datasets were generated or analyzed in this
 %% study.

\end{document}